\documentclass[oneside, 11pt, reqno]{amsart}

\usepackage[a4paper, margin=1in]{geometry}

\usepackage[utf8]{inputenc}
\usepackage[T1]{fontenc}
\usepackage{lmodern}

\usepackage{amssymb}
\usepackage{float}
\usepackage{mathrsfs}
\usepackage[dvipsnames]{xcolor}
\usepackage[shortlabels]{enumitem}
\usepackage{xspace}

\usepackage{tikz}
\usetikzlibrary{positioning}
\usetikzlibrary{shapes.geometric}

\usepackage{thmtools}
\usepackage{hyperref}
\hypersetup{linktoc=all,colorlinks=true,allcolors=Blue,breaklinks=true}
\usepackage[capitalize,noabbrev]{cleveref}

\newtheorem{lemma}{Lemma}[section]
\newtheorem{theorem}[lemma]{Theorem}
\newtheorem{proposition}[lemma]{Proposition}
\newtheorem{corollary}[lemma]{Corollary}
\newtheorem{claim}[lemma]{Claim}
\theoremstyle{definition}
\newtheorem{definition}[lemma]{Definition}
\newtheorem{remark}[lemma]{Remark}
\newtheorem{example}[lemma]{Example}
\newtheoremstyle{named}{}{}{\itshape}{}{\bfseries}{.}{.5em}{#3}
\theoremstyle{named}
\newtheorem*{theorem*}{Theorem}

\setlist[enumerate,1]{label={\upshape(\alph*)},
ref={\upshape\thetheorem(\alph*)}}% Default enumerate should be upshape.
\setlist[enumerate,2]{label={\upshape(\roman*)},
ref={\upshape\thetheorem(\roman*)}}% Default enumerate should be upshape.

\makeatletter
\newcommand{\itemlabel}[1]{%
  \ifnum\@enumdepth=1
    \protected@edef\@currentlabel{{(\alph{enumi})}}%
  \else\ifnum\@enumdepth=2
    \protected@edef\@currentlabel{{(\roman{enumii})}}%
  \fi\fi
  \oldlabel{#1}%
}

\newcommand\crefenumtype{} % Add cref key to lists
\SetEnumitemKey{cref}{%
  before={\def\crefenumtype{#1}%
  \let\oldlabel\label
  \renewcommand{\label}[2][]{%
    \ifx\crefenumtype\empty
      \oldlabel{####2}%
    \else
      \oldlabel[\crefenumtype]{####2}%
      \itemlabel{####2@item}%
    \fi}%
  }%
}
\newcommand{\itemref}[1]{\ref{#1@item}}
\makeatother

\DeclareMathOperator{\cl}{cl}
\DeclareMathOperator{\intr}{int}

\title{When is double negation Scott continuous?}

\keywords{
Pointfree topology, 
Scott continuous nucleus,
double negation nucleus,
boolean nuclei.
}
\date{}

\subjclass[2020]{
    18F70;
	06D22;
}

\author{G. Bezhanishvili}
\address{Department of Mathematical Sciences, New Mexico State University, Las Cruces, NM, USA}
\email{guram@nmsu.edu}
\author{S. D. Melzer}
\address{Department of Mathematics, University of Salerno, Fisciano, Italy}
\email{smelzer@unisa.it}

\begin{document}
\begingroup
\def\uppercasenonmath#1{}
\let\MakeUppercase\relax
\maketitle
\endgroup

\begin{abstract}
Let $L$ be the frame of opens of a $T_0$-space $X$.
We prove that if $X$ is sober and $T_1$, then the double negation nucleus on $L$ is Scott continuous iff $X$ is discrete. 
It follows that if, in addition, $X$ is
compact then double negation is Scott continuous iff $X$ is finite. We show that both the sober and $T_1$ assumptions 
are essential, and generalize the above results to all boolean nuclei on $L$. A pointfree characterization of when $X$ is sober and $T_1$ is also given by proving that it is equivalent to Scott continuous nuclei on $L$ being closed. 
\end{abstract}

\vspace{1em}

\section{Introduction and motivation}

A \emph{nucleus} on a frame $L$ is a map $j:L\to L$ such that
\[
    a\leq ja,\qquad jja=ja,\qquad j(a\wedge b)=ja\wedge jb
\]
for all $a,b\in L$. 
Ordering nuclei pointwise yields the \emph{assembly frame} $\mathrm N L$ of $L$. If $j\in\mathrm N L$, then 
\[
    jL=\{a\in L\mid ja=a\}
\]
is a sublocale of $L$, and every sublocale arises this way. Thus, the assembly frame is dually isomorphic to the coframe of sublocales of $L$. As such, nuclei play a central role in pointfree topology (see, e.g., \cite{PP12}).

One of the most studied nuclei is the double negation nucleus, given by $a \mapsto a^{**}$, where $a^*$ denotes the pseudocomplement of $a$ in $L$. The sublocale of fixpoints of double negation is the boolean frame $\mathfrak B L$ known as the \emph{booleanization} of $L$. It is a classic result of Isbell \cite{Isb72} that $\mathfrak B L$ is the least dense sublocale of $L$, where we recall that a sublocale is \emph{dense} provided it contains the bottom element $0$ of $L$. 

We call a sublocale {\em boolean} if it is a boolean frame. It is well known (see, e.g., \cite[10.4]{PP12}) that a sublocale is boolean iff it is the fixpoints of a \emph{boolean nucleus}
\[
    \mathfrak b_u(a) = (a \to u) \to u
\]
for some $u \in L$. Boolean nuclei/sublocales play an important role since every nucleus is a meet of boolean nuclei \cite[Lem.~7(ii)]{Sim78}, and hence every sublocale is a join of boolean sublocales. 

In this note we are interested in nuclei that preserve directed joins: 

\begin{definition}\label{def: Scott continuous}
    A nucleus $j$ on a frame $L$ is \emph{Scott continuous} provided $j(\bigvee S)=\bigvee j[S]$ for each directed $S\subseteq L$. 
    We also call a sublocale \emph{Scott continuous} when its associated nucleus is Scott continuous.
\end{definition}

Scott continuous nuclei have been studied in the literature under various names, including \emph{finitary} nuclei \cite{Ban88,Sun88} and \emph{perfect} nuclei \cite{Esc99} (the latter paper also uses the terminology of \cref{def: Scott continuous}). 
If $L$ is an algebraic frame, then Scott continuous nuclei are precisely the \emph{inductive} nuclei of \cite{MZ03}.  

\begin{lemma}[{\cite[Lem.~3.1.8]{Esc98}}]
    Let $L$ be a frame. The set $\mathrm{SN} L$ of Scott continuous nuclei is a subframe of $\mathrm N L$.
    \label{lem:SNL-subframe}
\end{lemma}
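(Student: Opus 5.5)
We have to show that $\mathrm{SN} L$ contains the top and bottom of $\mathrm N L$, is closed under finite meets, and is closed under arbitrary joins computed in $\mathrm N L$. The bottom nucleus is the identity and the top nucleus is the constant map $a\mapsto 1$. Both clearly preserve directed joins; for the constant map we use that a directed set is nonempty. Finite meets in $\mathrm N L$ are computed pointwise, $(j\wedge k)(a)=ja\wedge ka$. So for directed $S$ we get
\[
(j\wedge k)\bigl(\textstyle\bigvee S\bigr)=\bigvee_{s\in S} js\wedge\bigvee_{t\in S} kt=\bigvee_{s,t\in S}(js\wedge kt),
\]
using the frame distributive law. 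Directedness then lets us replace each pair $s,t$ by an upper bound $r\in S$, with $js\wedge kt\le jr\wedge kr$. This gives $\bigvee_{r}(j\wedge k)(r)$, and the reverse inequality is monotonicity.

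The main step is arbitrary joins, which in $\mathrm N L$ are not pointwise. My plan is to first treat directed joins, and then reduce the general case to them.

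\textbf{Directed joins.} Let $\{j_i\}$ be a directed family of Scott continuous nuclei and set $k(a)=\bigvee_i j_i(a)$.
\begin{itemize}
\item $k$ is inflationary and preserves binary meets: expand $\bigvee_i j_i a\wedge\bigvee_l j_l b$ by distributivity and use directedness of the family.
\item $k$ is Scott continuous, since joins commute with joins.
\item $k$ is idempotent. This is where Scott continuity is used. Because each $j_i$ preserves directed joins and $\{j_l a\}_l$ is directed, we have $j_i(ka)=\bigvee_l j_i j_l a$. Choosing $m\ge i,l$ gives $j_i j_l a\le j_m j_m a=j_m a\le ka$, so $kka\le ka$.
\end{itemize}
Thus $k$ is a Scott continuous nucleus. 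It is the least nucleus above all $j_i$, since any such nucleus dominates $k$ pointwise. Hence directed joins in $\mathrm N L$ of Scott continuous nuclei are pointwise and remain in $\mathrm{SN} L$.

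\textbf{Finite joins.} It remains to handle binary joins. For Scott continuous $j,k$, consider the increasing sequence of composites
\[
(jk)^n = jkjk\cdots jk \quad (n\text{ alternations}).
\]
Set $h(a)=\bigvee_n (jk)^n a$. Each composite of Scott continuous maps is Scott continuous, so $h$ is a directed join of Scott continuous maps and is therefore Scott continuous. Its idempotence follows as above: $jk(ha)=\bigvee_n (jk)^{n+1}a$. This gives $j(ha)=ha=k(ha)$, using $k\le jk$ together with inflationarity and idempotence of $j$, $k$.

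The delicate point is that $h$ preserves binary meets. Each $(jk)^n$ is a composite of meet-preserving maps, hence preserves meets, and the family is a chain, so
\[
ha\wedge hb=\bigvee_{n,m}(jk)^n a\wedge(jk)^m b=\bigvee_n (jk)^n(a\wedge b).
\]
So $h$ is a nucleus above $j$ and $k$. Any nucleus above both is closed under the composites, hence lies above $h$. Therefore $h=j\vee k$ and it is Scott continuous.

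\textbf{Arbitrary joins.} Finally, an arbitrary join is the directed join of its finite subjoins, which completes the proof. I expect the main obstacle to be the binary join, specifically verifying that the iterated composite really is the join in $\mathrm N L$. Once Scott continuity makes $h$ idempotent, this follows from the argument above.
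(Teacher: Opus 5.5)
Your proof is correct and complete. Scott continuity is used exactly where it is needed: for idempotence of the pointwise directed join, and for the identity $jk(ha)=\bigvee_n (jk)^{n+1}a$. Reducing arbitrary joins to the directed join of finite subjoins is also sound.

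The paper gives no proof of its own here, only the citation to Escardó. It does, however, have at hand a much shorter route for the join part. By \cref{lem:scts-sublocale}, a nucleus $j$ is Scott continuous iff $jL$ is closed under directed joins of $L$. Under the dual isomorphism between $\mathrm N L$ and the coframe of sublocales, the join $\bigvee_i j_i$ in $\mathrm N L$ has fixpoint set $\bigcap_i j_iL$. An intersection of subsets closed under directed joins is again closed under directed joins, so $\bigvee_i j_i\in\mathrm{SN} L$ follows immediately, without constructing the join.

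For finite meets your pointwise computation remains the natural argument. The fixpoint set of $j\wedge k$ is the sublocale join $\{a\wedge b\mid a\in jL,\ b\in kL\}$, and its closure under directed joins is not visible directly.

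What your longer route buys is explicit formulas that the fixpoint argument does not provide. Directed joins of Scott continuous nuclei are computed pointwise in $\mathrm N L$, and binary joins are given by $j\vee k=\bigvee_n (jk)^n$ pointwise.
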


As was observed in \cite{Esc99}, $\mathrm{SN} L$ serves as the pointfree analog of the patch topology, which plays a key role in the study of spectral spaces or, more generally, of stably locally compact spaces. 
As a consequence of \cref{lem:SNL-subframe}, we obtain that for each nucleus there is a largest Scott continuous nucleus beneath it. In particular, there exists a largest Scott continuous nucleus beneath the double negation nucleus. 
In the setting of arithmetic frames,\footnote{That is, algebraic frames in which the meet of two compact elements is again compact.} this is precisely the \emph{$d$-nucleus} of \cite{MZ03} defined by \[
    da = \bigvee \{k \in \mathrm K L \mid k \leq a\},
\]
where $\mathrm K L$ denotes the set of compact elements of $L$.
Thus, if $L$ is arithmetic, then 
double negation is Scott continuous iff it is equal to $d$. This motivates the question of when double negation is Scott continuous for an arbitrary $L$, which is the topic of this short note. 
More generally, we consider the question of when a boolean nucleus is Scott continuous. Our findings are summarized below, from which it follows that this happens rarely.  

We use the Hofmann-Mislove Theorem \cite{HM81} to prove that, on the frame of opens of a sober $T_1$-space, double negation is Scott continuous iff the space is discrete. We also show that neither 
hypothesis can be omitted, and that an analogous pointfree 
statement, that double negation is Scott continuous iff the frame is boolean, fails for both subfit and Hausdorff frames. For fit frames, Sexton and Simmons \cite[Thm.~10.6]{SS06} proved that every Scott continuous nucleus is closed, which implies the above statement.
We show that every Scott continuous nucleus on the frame of opens of a $T_0$-space $X$ is closed iff $X$ is sober and $T_1$, thus providing a pointfree characterization of sober $T_1$-spaces. We also generalize the characterization of when double negation is Scott continuous to arbitrary boolean nuclei on the frame of opens $\Omega(X)$ of a sober $T_1$-space $X$ by showing that the boolean nucleus associated with $U\in\Omega(X)$ is Scott continuous iff $X\setminus U$ is discrete. Finally, we generalize the notion of a unit (see \cite{KM07}) to that of a $j$-unit, where $j$ is a nucleus, and use this new notion to characterize Scott continuity of $j$ in the 
compact case. In particular, we prove that every boolean nucleus on a compact frame is Scott continuous iff the corresponding boolean sublocale is finite.

\section{Scott continuity for double negation}

We start by showing that the double negation nucleus on the frame of opens of a sober $T_1$-space is Scott continuous iff the space is discrete. To do so, we utilize the Hofmann-Mislove Theorem. 

Recall that a filter $F$ of a frame $L$ is \emph{Scott open} provided for all directed $S\subseteq L$, if $\bigvee S \in F$ then $F \cap S \neq \varnothing$.
It is known (see, e.g., \cite[Prop.~II-2.1]{GH+03})  that a Scott continuous nucleus pulls back
Scott open filters to Scott open filters. The next theorem shows that for
spatial frames the converse also holds, and that it is enough to test this on
completely prime filters.

\begin{theorem} \label{thm:spatial}
    Let $L$ be a spatial frame and $j$ a nucleus on $L$. The following are equivalent.
    \begin{enumerate}[cref=theorem]
        \item $j$ is Scott continuous;\label{spatial-cond-1}
        \item If $F \subseteq L$ is a Scott open filter, then $j^{-1}(F)$ is a Scott open filter.\label{spatial-cond-2}
        \item If $P \subseteq L$ is a completely prime filter, then $j^{-1}(P)$ is a Scott open filter.\label{spatial-cond-3}
    \end{enumerate}
\end{theorem}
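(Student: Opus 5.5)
We prove the cycle (a)$\Rightarrow$(b)$\Rightarrow$(c)$\Rightarrow$(a).

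\emph{(a)$\Rightarrow$(b).} This is the cited fact \cite[Prop.~II-2.1]{GH+03}; we recall the short argument. Let $F$ be a Scott open filter. Since $j$ is monotone and preserves binary meets, $j^{-1}(F)$ is a filter. Now let $S$ be directed with $\bigvee S\in j^{-1}(F)$. Then $\bigvee j[S]=j(\bigvee S)\in F$. The set $j[S]$ is directed, so Scott openness of $F$ gives some $s\in S$ with $js\in F$, i.e.\ $s\in j^{-1}(F)$.

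\emph{(b)$\Rightarrow$(c).} This is immediate, because every completely prime filter is Scott open. Indeed, if $\bigvee S\in P$, complete primeness gives some $s\in S$ with $s\in P$.

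\emph{(c)$\Rightarrow$(a).} This is the real content, and it is where spatiality enters.
\begin{enumerate}
\item Fix a directed $S\subseteq L$ and put $a=\bigvee S$ and $b=\bigvee j[S]$. Since $j$ is monotone, $b\le ja$, so we must show $ja\le b$.
\item Since $L$ is spatial, it suffices to show that every completely prime filter $P$ containing $ja$ also contains $b$. Otherwise there is a point $P$ with $ja\in P$ and $b\notin P$, which would contradict $ja\le b$ failing being detected by points.
\item So take a completely prime $P$ with $ja\in P$. Then $a=\bigvee S\in j^{-1}(P)$.
\item By (c), $j^{-1}(P)$ is Scott open, so there is $s\in S$ with $js\in P$.
\item Since $js\le b$ and $P$ is an up-set, $b\in P$, as required.
\end{enumerate}

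The only step needing care is the use of spatiality in (c)$\Rightarrow$(a): in a spatial frame, $x\not\le y$ is witnessed by a completely prime filter containing $x$ but not $y$. The argument applies this with $x=ja$ and $y=b$. We use the hypothesis of (c) only to know that $j^{-1}(P)$ is Scott open; that it is a filter is automatic.
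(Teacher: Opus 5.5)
Your proof is correct and follows the paper's argument step for step. It uses the same cycle, the same appeal to monotonicity and meet-preservation for (a)$\Rightarrow$(b), and the same use of spatiality in (c)$\Rightarrow$(a), separating $j(\bigvee S)$ from $\bigvee j[S]$ by a completely prime filter. The only difference is that you argue (c)$\Rightarrow$(a) directly rather than by contradiction; the justifying sentence in your step 2 is garbled and should simply say that in a spatial frame $x\le y$ holds whenever every completely prime filter containing $x$ also contains $y$.
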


\begin{proof}
\itemref{spatial-cond-1}$\Rightarrow$\itemref{spatial-cond-2}.
Let $F$ be a Scott open filter. Since $j$ is order preserving and commutes with
finite meets, $j^{-1}(F)$ is a filter. If $S\subseteq L$ is directed and
$\bigvee S\in j^{-1}(F)$, then
$\bigvee j[S]=j(\bigvee S)\in F$ because $j$ is Scott continuous.
Since $F$ is Scott open, $j(s)\in F$ for some $s\in S$. Thus,
$s\in j^{-1}(F)$, and so $j^{-1}(F)$ is Scott open.

\itemref{spatial-cond-2}$\Rightarrow$\itemref{spatial-cond-3}. This is immediate since every
completely prime filter is Scott open.

\itemref{spatial-cond-3}$\Rightarrow$\itemref{spatial-cond-1}.
Let $S\subseteq L$ be directed. Since $j$ is order preserving,
$\bigvee j[S]\leq j(\bigvee S)$. If the reverse inequality fails, then
spatiality of $L$ yields a completely prime filter $P$ such that
$j(\bigvee S)\in P$ but $\bigvee j[S]\notin P$. Thus,
$\bigvee S\in j^{-1}(P)$. Since $j^{-1}(P)$ is Scott open, there is
$s\in S$ such that $j(s)\in P$, contradicting
$\bigvee j[S]\notin P$. Therefore,
$j(\bigvee S)=\bigvee j[S]$.
\end{proof}

The spatiality assumption in the above theorem is essential as the next example shows.

\begin{example}
    Let $A$ and $B$ be complete atomless boolean algebras, and let $L$ be their ordinal sum, obtained by identifying $1_A$ with $0_B$; see \cref{fig:example-1}. Then
    \[
        a^{**}=
        \begin{cases}
            a, & a\in A\setminus\{1_A\},\\
            1_B, & a\in B.
        \end{cases}
    \]
    Let $I$ be a maximal ideal of $A$. 
    Since $A$ is atomless, $\bigvee I = 1_A$. 
    Therefore,
        $(\bigvee I)^{**}=1_B$
        while
        $\bigvee\{a^{**} \mid a \in I\} =\bigvee I=1_A$,
    so double negation is not Scott continuous. On the other hand, $L$ has no completely prime filters. Thus, condition \itemref{spatial-cond-3} of 
    \cref{thm:spatial}
    holds vacuously.

    \begin{figure}[H]
        \centering
        \begin{tikzpicture}[
            block/.style={draw, thick, ellipse, minimum width=2.5cm, minimum height=3cm,
            scale=.75},
            dot/.style={circle, fill, inner sep=1.5pt},
            every node/.style={font=\small}
        ]
            \node[block] (A) {$A$};
            \node[block, above=0pt of A] (B) {$B$};

            \node[dot, label=below:{$0_A$}] at (A.south) {};
            \node[dot, label={[xshift=5mm]right:$1_A=0_B$}] at (A.north) {};
            \node[dot, label=above:{$1_B$}] at (B.north) {};
        \end{tikzpicture}
        \caption{The ordinal sum of $A$ and $B$}
        \label{fig:example-1}
    \end{figure}
\end{example}

We now recall the Hofmann-Mislove Theorem (see, e.g., \cite[Thm.~II-1.20]{GH+03}).

\begin{theorem*}[Hofmann--Mislove Theorem]
    \label{HM}
    Let $X$ be a sober space and $\Omega(X)$ the frame of opens of $X$. The poset of Scott open filters of $\Omega(X)$ ordered by inclusion is isomorphic to the poset of compact saturated subsets of $X$ ordered by reverse inclusion. The isomorphism sends a Scott open filter $\mathcal F$ to the compact saturated set $\bigcap \mathcal F$, and its inverse sends a compact saturated set $K$ to the Scott open filter $\{U \in \Omega(X) \mid K \subseteq U\}$.
\end{theorem*}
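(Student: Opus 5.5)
The statement is the Hofmann--Mislove Theorem, which the paper recalls from the literature. I would prove it by showing that the two maps are well defined and mutually inverse, and that both are antitone. Fix a sober space $X$.

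\emph{From compact saturated sets to filters.} Given a compact saturated $K$, the family $\mathcal F_K=\{U\in\Omega(X)\mid K\subseteq U\}$ is clearly an upward closed set closed under finite intersections, so it is a filter. It is Scott open by compactness: if a directed union $\bigcup S$ contains $K$, then finitely many members of $S$ cover $K$, and directedness puts them inside a single member of $S$. Since $K$ is saturated, it equals the intersection of its open neighbourhoods, so $\bigcap\mathcal F_K=K$. Both maps obviously reverse inclusion.

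\emph{From filters to compact saturated sets.} Let $\mathcal F$ be a Scott open filter and put $K=\bigcap\mathcal F$. Then $K$ is saturated, being an intersection of opens. Everything else rests on one key claim:
\[
U\supseteq K \implies U\in\mathcal F \qquad\text{for every open } U.
\]
Granting the claim, compactness of $K$ follows quickly. If $K\subseteq\bigcup S$ with $S$ directed, the claim gives $\bigcup S\in\mathcal F$, and Scott openness gives some member of $S$ in $\mathcal F$, which therefore contains $K$. The claim also yields $\mathcal F=\mathcal F_K$, since the inclusion $\mathcal F\subseteq\mathcal F_K$ is trivial. So the two maps are mutually inverse order isomorphisms.

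\emph{Proof of the key claim (the main obstacle, and the only place sobriety is used).} Suppose $U\supseteq K$ but $U\notin\mathcal F$.
\begin{enumerate}
\item Apply Zorn's lemma to the opens containing $U$ and not in $\mathcal F$. Scott openness makes a union of a chain of such opens stay outside $\mathcal F$, so there is a maximal such open $P$.
\item Check that $P$ is meet-prime. If $V\cap W\subseteq P$ with $V,W\not\subseteq P$, then by maximality $P\cup V\in\mathcal F$ and $P\cup W\in\mathcal F$. Hence their intersection $P\cup(V\cap W)=P$ lies in $\mathcal F$, a contradiction.
\item Since $P\neq X$ and $P$ is meet-prime, the closed set $X\setminus P$ is irreducible. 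Sobriety gives a point $x$ with $X\setminus P=\cl\{x\}$, so $P$ is exactly the set of points whose open neighbourhoods omit $x$.
\item Conclude. Every $V\in\mathcal F$ satisfies $V\not\subseteq P$, so $V$ meets $\cl\{x\}$ and hence contains $x$. Therefore $x\in\bigcap\mathcal F=K\subseteq U\subseteq P$, contradicting $x\notin P$.
\end{enumerate}
The delicate points are the Zorn argument in the first step, which needs Scott openness of $\mathcal F$, and the primeness verification in the second step. The remaining verifications are routine.
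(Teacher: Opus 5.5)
Your proof is correct. The paper does not prove this theorem; it only quotes it from \cite{GH+03}. Your argument is the standard proof found there: Scott openness lets Zorn's lemma produce a maximal open $P\supseteq U$ with $P\notin\mathcal F$, maximality makes $P$ meet-prime, and sobriety writes $P=X\setminus\cl\{x\}$ with $x\in\bigcap\mathcal F$.

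One cosmetic fix in step 3: $P$ is the set of points having \emph{some} open neighbourhood that omits $x$. What you actually use, correctly, is that an open $V$ satisfies $V\subseteq P$ iff $x\notin V$.
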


In particular, a Scott open filter is proper iff its corresponding compact saturated set is nonempty. As is common, for $x \in X$, we denote the completely prime filter $\{ U \in \Omega(X) \mid x \in U \}$ by $\mathcal F_x$. The following lemma will be used throughout.

\begin{lemma}\label{lem: key}
    Let $X$ be a sober $T_1$-space. If $j$ is a Scott continuous nucleus on $\Omega(X)$, then $j^{-1}(\mathcal F_x) = \mathcal F_x$ for each $x \notin j(\varnothing)$.
\end{lemma}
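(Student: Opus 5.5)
The plan is to combine the implication \itemref{spatial-cond-1}$\Rightarrow$\itemref{spatial-cond-2} of \cref{thm:spatial} with the Hofmann--Mislove Theorem. Since $j$ is Scott continuous and $\mathcal F_x$ is a completely prime, hence Scott open, filter, $j^{-1}(\mathcal F_x)$ is a Scott open filter of $\Omega(X)$. By Hofmann--Mislove it therefore corresponds to the compact saturated set
\[
K=\bigcap j^{-1}(\mathcal F_x),
\]
and $j^{-1}(\mathcal F_x)=\{U\in\Omega(X)\mid K\subseteq U\}$. The goal is to show $K=\{x\}$. Then, because $X$ is $T_1$ and so every subset is saturated, we get $j^{-1}(\mathcal F_x)=\{U\mid x\in U\}=\mathcal F_x$.

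First, the inclusion $\mathcal F_x\subseteq j^{-1}(\mathcal F_x)$ is immediate from $U\subseteq jU$. Hence $K\subseteq\bigcap\mathcal F_x$. In a $T_1$-space every point is the intersection of its open neighbourhoods, so $\bigcap\mathcal F_x=\{x\}$ and $K\subseteq\{x\}$.

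It remains to rule out $K=\varnothing$. If $K$ were empty, the Hofmann--Mislove correspondence would give $j^{-1}(\mathcal F_x)=\Omega(X)$, the improper filter. In particular $\varnothing\in j^{-1}(\mathcal F_x)$, so $x\in j(\varnothing)$, contradicting the hypothesis. Thus $K=\{x\}$, and the identity follows.

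The only delicate point is the use of Hofmann--Mislove to turn the filter $j^{-1}(\mathcal F_x)$ into a compact saturated set. Sobriety is needed precisely there, and the $T_1$ assumption is used twice: to identify $\bigcap\mathcal F_x$ with $\{x\}$, and to ensure $\{x\}$ is saturated, so that its filter of neighbourhoods is exactly $\mathcal F_x$. Otherwise the argument is a direct verification.
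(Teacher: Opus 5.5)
Your proposal is correct and follows essentially the same route as the paper's proof. You obtain Scott openness of $j^{-1}(\mathcal F_x)$ via \cref{thm:spatial}, get the inclusion $\bigcap j^{-1}(\mathcal F_x)\subseteq\{x\}$ from $\mathcal F_x\subseteq j^{-1}(\mathcal F_x)$ and $T_1$, rule out emptiness using $x\notin j(\varnothing)$, and conclude by Hofmann--Mislove.
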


\begin{proof}
    Since $\mathcal F_x$ is Scott open, $\mathcal G_x:=j^{-1}(\mathcal F_x)$ is Scott open by 
    \cref{thm:spatial}. We have $$U \in \mathcal F_x \Longrightarrow x \in U \Longrightarrow x \in j(U) \Longrightarrow j(U) \in \mathcal F_x \Longrightarrow U \in \mathcal G_x.$$ Therefore, $\mathcal F_x \subseteq \mathcal G_x$, and so $$G_x := \bigcap \mathcal G_x \subseteq \bigcap \mathcal F_x =: F_x.$$ 
    Since $X$ is a $T_1$-space, $F_x = \{x\}$. Moreover, since $x\notin j(\varnothing)$, we see that $j(\varnothing) \notin \mathcal F_x$, so $\varnothing \notin \mathcal G_x$. Thus,  $\mathcal G_x$ is proper, and hence $G_x$ is a nonempty compact saturated set 
    by the Hofmann--Mislove Theorem. Consequently, $\varnothing \ne G_x \subseteq F_x =\{x\}$, yielding that $G_x = F_x = \{x\}$. Since both $\mathcal F_x$ and $\mathcal G_x$ are Scott open filters, applying the Hofmann--Mislove Theorem again, we conclude that $\mathcal G_x = \mathcal F_x$.
\end{proof}

We now use the above lemma to prove our first main result.

\begin{theorem} \label{thm-new-main}
    Let $X$ be a sober $T_1$-space. The following are equivalent.
    \begin{enumerate}[cref=theorem]
        \item The double negation nucleus on $\Omega(X)$ is Scott continuous;\label{thm-new-main-1}
        \item The double negation nucleus on $\Omega(X)$ is the identity;\label{thm-new-main-2}
        \item $X$ is discrete.\label{thm-new-main-3}
    \end{enumerate}
\end{theorem}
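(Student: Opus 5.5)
I will prove the cycle \itemref{thm-new-main-3}$\Rightarrow$\itemref{thm-new-main-2}$\Rightarrow$\itemref{thm-new-main-1}$\Rightarrow$\itemref{thm-new-main-3}. The first two implications are routine.

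For \itemref{thm-new-main-3}$\Rightarrow$\itemref{thm-new-main-2}: if $X$ is discrete, then $\Omega(X)$ is the powerset of $X$, which is boolean. Hence $U^{*}=X\setminus U$ and $U^{**}=U$ for every open $U$.

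For \itemref{thm-new-main-2}$\Rightarrow$\itemref{thm-new-main-1}: the identity map trivially preserves directed joins.

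The substance is \itemref{thm-new-main-1}$\Rightarrow$\itemref{thm-new-main-3}, and here I will apply \cref{lem: key} to $j=(-)^{**}$. Recall that in $\Omega(X)$ we have $U^{*}=\intr(X\setminus U)$ and $U^{**}=\intr\cl U$, so $j(\varnothing)=\varnothing$. \cref{lem: key} therefore gives $j^{-1}(\mathcal F_x)=\mathcal F_x$ for every $x\in X$. In words: for every open $U$ and every point $x$,
\[
x\in \intr\cl U \Longrightarrow x\in U .
\]
Taking $x$ to range over all points, this says $\intr\cl U\subseteq U$. Combined with $U\subseteq\intr\cl U$, we get $U^{**}=U$ for all open $U$. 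So every open set is regular open, and incidentally this also yields \itemref{thm-new-main-2} directly.

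It remains to deduce discreteness from every open set being regular open, using the $T_1$ hypothesis. Fix $x\in X$. Since $\{x\}$ is closed, $U:=X\setminus\{x\}$ is open, hence regular open.
\begin{itemize}
\item If $x$ is not isolated, then $U$ is dense. Then $\intr\cl U=X\neq U$, a contradiction.
\item Therefore $x$ is isolated, i.e.\ $\{x\}$ is open.
\end{itemize}
As $x$ was arbitrary, $X$ is discrete.

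The only step needing real input is the passage from Scott continuity to $j^{-1}(\mathcal F_x)=\mathcal F_x$. This is exactly \cref{lem: key}, which packages the Hofmann--Mislove argument, so once that lemma is available the main obstacle is just correctly checking its hypothesis $x\notin j(\varnothing)$, which holds since $\varnothing^{**}=\varnothing$.
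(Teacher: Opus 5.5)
Your proposal is correct and follows essentially the same route as the paper: you apply \cref{lem: key} to double negation (using $\varnothing^{**}=\varnothing$) to get $x\in U$ iff $x\in\intr\cl U$, and then test this on $U=X\setminus\{x\}$ to conclude that $\{x\}$ is open. The only cosmetic difference is that you first globalize to ``every open set is regular open'' (which, as you note, gives (b) directly) before specializing, whereas the paper argues pointwise at the fixed $x$.
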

\begin{proof}
    The implications \itemref{thm-new-main-3}$\Rightarrow$\itemref{thm-new-main-2}$\Rightarrow$\itemref{thm-new-main-1} are obvious.
    To see that \itemref{thm-new-main-1}$\Rightarrow$\itemref{thm-new-main-3}, let the double negation nucleus on $\Omega(X)$ be Scott continuous. It suffices to show that every singleton of $X$ is open. Let $x \in X$. Since $\varnothing^{**}=\varnothing$, \cref{lem: key} applies, by which $\mathcal F_x = \mathcal G_x$.
    Thus, $x \in U$ iff $x \in U^{**} = \intr \cl U$ for each $U \in \Omega(X)$. Let $U=X\setminus\{x\}$, which is open because $X$ is $T_1$. Since $x\notin U$, we have $x\notin\intr\cl U$. Therefore, $\cl U\neq X$, and hence $\cl U=U$. Thus, $\{x\}$ is open.
\end{proof}

Since a compact $T_1$-space is discrete iff it is finite, \cref{thm-new-main} immediately yields:

\begin{corollary}\label{cor: compact case}
    If $X$ is a compact sober $T_1$-space, then the double negation nucleus on $\Omega(X)$ is Scott continuous iff $X$ is finite.
\end{corollary}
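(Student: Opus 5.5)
The plan is to reduce the corollary to \cref{thm-new-main}, so that it only remains to check that a compact $T_1$-space is discrete iff it is finite. Since $X$ is sober and $T_1$, \cref{thm-new-main} tells us that double negation on $\Omega(X)$ is Scott continuous iff $X$ is discrete. Everything therefore comes down to this elementary topological equivalence, which we verify in both directions under compactness.

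If $X$ is finite, then every subset is a finite union of singletons. Each singleton is closed because $X$ is $T_1$, so every subset is closed, and hence every subset is open; thus $X$ is discrete. Notice that this direction does not need compactness. For the converse, suppose $X$ is discrete and compact. The family $\{\{x\} \mid x \in X\}$ is then an open cover of $X$, and any subcover must contain every singleton. Compactness produces a finite subcover, so $X$ is finite.

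Putting these together: Scott continuity of double negation is equivalent to discreteness of $X$ by \cref{thm-new-main}, and discreteness is equivalent to finiteness by the argument above. No step presents a genuine obstacle, since all the substance is in \cref{thm-new-main}. The only thing to keep in mind is that both the sober and $T_1$ hypotheses are needed in order to invoke that theorem, and $T_1$ is used once more in the finite$\Rightarrow$discrete direction.
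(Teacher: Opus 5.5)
Your proposal is correct and follows the paper's route: the paper also derives the corollary immediately from \cref{thm-new-main} together with the fact that a compact $T_1$-space is discrete iff it is finite. You have verified both directions of that topological fact explicitly and correctly.
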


We next show that neither the sober nor $T_1$ assumption on $X$ can be dropped from \cref{thm-new-main}.

\begin{example}\leavevmode\label{cofinite-example} 
    \begin{enumerate}[cref=example]
        \item Equip an infinite set $X$ with the cofinite topology. Then $X$ is a non-discrete $T_1$-space. On the other hand, since each nonempty open set of $X$ is dense, $U^{**} = \intr \cl U = X$ for each $U \in \Omega(X) \setminus \{\varnothing\}$. Therefore, $\mathfrak B L = \{\varnothing,X\}$. Because the sublocale of fixpoints of the double negation nucleus is finite, it must be Scott continuous by \cref{lem:finite-scts} below.\label{cofinite-example-1}
        \item
        Let $Y$ be the soberification of $X$, so $Y$ is sober (but not $T_1$).
        Since 
        $\Omega(Y)\cong\Omega(X)$ (see, e.g., \cite[Rem.~6.3.1]{PP12}), 
        double negation is Scott continuous on $\Omega(Y)$ by \itemref{cofinite-example-1}. On the other hand, $Y$ is not discrete. 
    \end{enumerate}
\end{example}

\begin{lemma}
    Let $j$ be a nucleus on a frame $L$. 
    \begin{enumerate}[cref=lemma]
        \item $j$ is Scott continuous iff $\bigvee S \in jL$ for each directed $S \subseteq jL$. \label{lem:scts-sublocale}
        \item If $jL$ is finite, then $j$ is Scott continuous. In particular,  every nucleus on a finite frame is Scott continuous. \label{lem:finite-scts}
    \end{enumerate}
    
\end{lemma}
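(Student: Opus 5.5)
For part (a), I would argue each direction separately and use only that $jL$ is closed under arbitrary meets, together with the fact that for $a\in L$, $ja$ is the least element of $jL$ above $a$.

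For the forward direction, assume $j$ is Scott continuous and let $S\subseteq jL$ be directed. Then
\[
j\bigl(\bigvee S\bigr)=\bigvee j[S]=\bigvee S,
\]
since $js=s$ for every $s\in S$. Hence $\bigvee S\in jL$.

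For the converse, let $S\subseteq L$ be directed. Since $j$ is monotone, $j[S]\subseteq jL$ is again directed. By hypothesis $b:=\bigvee j[S]$ lies in $jL$. Because $s\le js\le b$ for each $s\in S$, we get $\bigvee S\le b$. Applying $j$ and using $jb=b$ gives $j(\bigvee S)\le b$. The reverse inequality $b\le j(\bigvee S)$ holds by monotonicity, so $j(\bigvee S)=\bigvee j[S]$. There is no real obstacle here; the one point to watch is that $j[S]$ stays directed.

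For part (b), I would reduce to (a). Take a directed $S\subseteq jL$. Since $jL$ is finite, $S$ is a finite set, and it is nonempty by the convention that directed sets are nonempty. A finite nonempty directed set has a greatest element: by induction every finite subset has an upper bound inside $S$, and applying this to $S$ itself gives an upper bound $m\in S$. Then $\bigvee S=m\in jL$, so $j$ is Scott continuous by (a).

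The "in particular" clause follows at once, because if $L$ is finite then so is $jL\subseteq L$. The only subtlety in this part is the nonemptiness convention for directed sets. If one allowed $S=\varnothing$, the join would be $0$, and $0\in jL$ can fail. With the standard convention the argument goes through as stated.
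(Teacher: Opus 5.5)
Your proposal is correct and follows the paper's proof essentially step for step. In (a) you use that $j[S]$ is directed and that $j(\bigvee S)\le j(\bigvee j[S])=\bigvee j[S]$. In (b) you observe that a finite directed subset of $jL$ has a largest element and then invoke (a); your explicit remark that directed sets are nonempty is a point the paper leaves implicit.
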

\begin{proof}
    \itemref{lem:scts-sublocale}. This result is mentioned without proof in \cite[p.~649]{Ban88}. For the reader's convenience, we give a short proof. If $j$ is Scott continuous and $S\subseteq jL$ is directed, then
$j(\bigvee S)=\bigvee j[S]=\bigvee S$, so $\bigvee S\in jL$.
Conversely, suppose that $jL$ is closed under directed joins, and let
$S\subseteq L$ be directed. Then $j[S]$ is directed and hence
$\bigvee j[S]\in jL$. Since $\bigvee S\leq\bigvee j[S]$, we have
$j(\bigvee S)\leq j(\bigvee j[S]) = \bigvee j[S]$. Since the converse inequality always holds,
we conclude that $j(\bigvee S)=\bigvee j[S]$.

    \itemref{lem:finite-scts}. Suppose $jL$ is finite and $S \subseteq jL$ is directed. 
    Then $S$ is finite, and since $S$ is directed, it must have a largest element, say $s$. 
    Thus, $j(\bigvee S) = j(s) = s$, so $\bigvee S = s \in j L$, 
    and hence $j$ is Scott continuous by \itemref{lem:scts-sublocale}. Now, if $L$ is finite, then $jL$ is finite for each nucleus $j$. Consequently, each nucleus on a finite frame is Scott continuous.
\end{proof}

It is a consequence of  
\cref{lem:finite-scts} that double negation on every chain is Scott continuous since if $L$ is a chain, then $\mathfrak B L = \{0,1\}$. However, there exist nuclei on chains that are not Scott continuous. The next result provides a characterization of these.

\begin{theorem}
Let $L$ be a chain and $j$ a nucleus on $L$. 
    \begin{enumerate}[cref=theorem]
        \item The following are equivalent.\label{chain-1}
    \begin{enumerate}
        \item $j$ is Scott continuous.\label{chain-cond-1}
        \item If $a \neq 0$ and 
        $a = \bigvee\{b \in L \mid b < a\}$,
        then $j(a) = \bigvee\{j(b) \mid b < a\}$.\label{chain-cond-2}
        \item $jL$ is closed under arbitrary nonempty joins. 
        \label{chain-cond-3}
    \end{enumerate}
    \item If $j$ is a dense nucleus on $L$, then $j$ is Scott continuous iff $jL$ is a subframe of $L$.\label{chain-2}
    \end{enumerate}
\end{theorem}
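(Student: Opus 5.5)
For part (a), I will prove (i)$\Rightarrow$(ii)$\Rightarrow$(iii)$\Rightarrow$(i). Throughout I use that in a chain every nonempty subset is directed, and that every nonempty $S$ either has a largest element or has a supremum $a=\bigvee S$ with $a=\bigvee\{b\in L\mid b<a\}$.

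\emph{(i)$\Rightarrow$(ii).} Suppose $a\neq 0$ and $a=\bigvee\{b\mid b<a\}$. The set $\{b\mid b<a\}$ is nonempty, since it contains $0$, and it is directed because $L$ is a chain. So Scott continuity gives $j(a)=\bigvee\{j(b)\mid b<a\}$ directly.

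\emph{(ii)$\Rightarrow$(iii).} Let $\varnothing\neq S\subseteq jL$ and put $a=\bigvee S$.
\begin{itemize}
\item[] If $a\in S$, then $a\in jL$ and there is nothing to prove.
\item[] Otherwise every $s\in S$ satisfies $s<a$. Since $S$ is cofinal in $\{b\mid b<a\}$ (any $b<a$ lies below some $s\in S$), we get $a=\bigvee\{b\mid b<a\}$. Also $a\neq 0$, because $a\notin S$ but some element of $S$ lies below $a$.
\end{itemize}
In the second case, (ii) gives $j(a)=\bigvee\{j(b)\mid b<a\}$. Cofinality of $S$ together with monotonicity of $j$ turns this into $\bigvee_{s\in S} j(s)=\bigvee S=a$. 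Hence $a\in jL$.

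\emph{(iii)$\Rightarrow$(i).} Directed subsets are nonempty, so (iii) says that $jL$ is closed under directed joins. Then \cref{lem:scts-sublocale} gives Scott continuity.

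For part (b), I will use (a)(iii). Here $jL$ always contains $1$ and is closed under finite meets, and meets in a chain are minima. So $jL$ is a subframe iff it contains $0$ and is closed under arbitrary joins, including the empty join $0$. Density means exactly that $j(0)=0$, so $0\in jL$. Thus, for dense $j$, closure of $jL$ under arbitrary joins is equivalent to closure under nonempty joins, and by (a) this is equivalent to Scott continuity. The converse direction is immediate, since a subframe is closed under all joins.

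The main obstacle is the case analysis in (ii)$\Rightarrow$(iii). There I must check carefully that when $\bigvee S\notin S$, the supremum really is a limit from below and is nonzero, so that hypothesis (ii) applies. The remaining steps are routine.
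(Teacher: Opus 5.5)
Your proof is correct and follows essentially the same route as the paper. You get (i)$\Rightarrow$(ii) from directedness of $\{b\mid b<a\}$, (ii)$\Rightarrow$(iii) from the case split on whether $\bigvee S\in S$ plus cofinality of $S$ below $a$, and you link (iii) and (i) through \cref{lem:scts-sublocale}; your explicit check that $a\neq 0$ and your spelled-out argument for (b) just fill in details the paper leaves implicit.
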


\begin{proof}
\itemref{chain-1}. Since every nonempty subset of a chain is directed,
\itemref{chain-cond-1} and \itemref{chain-cond-3} are equivalent by
\cref{lem:scts-sublocale}. The implication \itemref{chain-cond-1}$\Rightarrow$\itemref{chain-cond-2} is immediate
since $\{b\in L\mid b<a\}$ is always directed in a chain. For \itemref{chain-cond-2}$\Rightarrow$\itemref{chain-cond-3}, let $S\subseteq jL$ be nonempty and let $a=\bigvee S$. If $a\in S$, then
$a\in jL$. Otherwise, $s<a$ for each $s\in S$, so
$a=\bigvee\{b\in L\mid b<a\}$. By \itemref{chain-cond-2},
$j(a)=\bigvee\{j(b)\mid b<a\}$.
Since $L$ is a chain and $a = \bigvee S$, for each $b<a$, there is $s\in S$ with $b\leq s$. Therefore, 
$j(b)\leq j(s)=s$. Thus, $$j(a)=\bigvee\{j(b)\mid b<a\} \le \bigvee S = a,$$ so 
$j(a)=a$, and hence $a\in jL$.

\itemref{chain-2}. This is immediate from \itemref{chain-1}.
\end{proof}

\begin{remark}[{\bf Connection to Hausdorff frames}]
\label{example:hausdorff}

Since every Hausdorff space is both sober and $T_1$, it is immediate from \cref{thm-new-main} that if $X$ is Hausdorff, then the double negation nucleus on $\Omega(X)$ is Scott continuous iff $X$ is discrete.
It is worth noting that this result does not extend to non-spatial Hausdorff frames. 
Recall (see, e.g., \cite[p.~44]{PP21}) that a frame $L$ is \emph{Hausdorff} provided 
\[
    a = \bigvee\{b \in L \mid b^* \nleq a\}
\]
for each $a \in L \setminus \{1\}$. 
    We show that there exist non-boolean Hausdorff frames in which double negation is Scott continuous. Consider the following instance of the ``slightly peculiar example'' of \cite[pp.~45--46]{PP21}: 
    
    Let $B$ be an atomless boolean frame and let
    \[
        L=\{(a,b)\in B^2\mid a\leq b\}.
    \]
    Clearly $L$ is not boolean, but $L$ is Hausdorff since $B$ is Hausdorff and has no maximal elements, thus \cite[Lem.~III-3.5.1]{PP21} applies. 
    For each $(a,b) \in L$, 
    we have
    \[
        (a,b)^* 
        = \bigvee\{(c,d) \in L \mid a \wedge c =  0 = b \wedge d\} 
        = \bigvee\{(d,d) \in L \mid b \wedge d = 0\}
        = (b^*,b^*).
    \]
    Therefore,
    $(a,b)^{**}=(b^{**},b^{**})=(b,b)$, 
    so 
    \[
    \left(\bigvee_{i\in I}(a_i,b_i)\right)^{**} = \left(\bigvee_{i\in I} a_i,\bigvee_{i\in I} b_i\right)^{**} = \left(\bigvee_{i\in I} b_i,\bigvee_{i\in I} b_i\right) = \left(\bigvee_{i\in I} b_i^{**},\bigvee_{i\in I} b_i^{**}\right) = \bigvee_{i\in I} (a_i,b_i)^{**},
    \]
    and hence the double negation nucleus preserves arbitrary joins. In particular, it is Scott continuous.
\end{remark}

\section{Connection to the existing literature}\label{sec: existing lit}

The topological space $X$ in \cref{cofinite-example}\itemref{cofinite-example-1} is $T_1$, so its frame of opens is subfit (see, e.g., \cite[p.~73]{PP12}). It therefore rules out a pointfree generalization of \cref{thm-new-main} to subfit frames. The frame in \cref{example:hausdorff} provides an analogous counterexample for Hausdorff frames. The situation improves for fit frames since in this case all Scott continuous nuclei are closed by a result of Sexton and Simmons \cite[Thm.~10.6]{SS06}. In \cref{thm:scts=closed} we observe that the same holds for the frame of opens of a sober $T_1$-space. Note that this doesn't follow from the result of Sexton and Simmons since the frame of opens of a sober $T_1$-space need not be fit (see, e.g., \cite[Ex.~5.3]{SS06}). This result is then strengthened in \cref{thm: 2nd main}, where we prove that, among $T_0$-spaces, the sober $T_1$-spaces are precisely those for which every Scott continuous nucleus on the frame of opens is closed, thus yielding a pointfree characterization of sober $T_1$-spaces.

Recall that a nucleus on a frame $L$ is \emph{closed} if it is of the form
\[
    \mathfrak c_u(a)=a\vee u
\]
for some $u\in L$. 
Every closed nucleus preserves nonempty joins since for each nonempty $S\subseteq L$,
\[
    \mathfrak c_u\!\left(\bigvee S\right)
    =u\vee\bigvee S
    =\bigvee\{u\vee s\mid s\in S\}
    =\bigvee\{\mathfrak c_u(s)\mid s\in S\}.
\]
Thus, every closed nucleus is Scott continuous, as noted in \cite[p.~6]{Esc99}. Therefore, the canonical embedding $a\mapsto\mathfrak c_a$ of $L$ into its assembly $\mathrm N L$ factors through $\mathrm{SN} L$. In particular, we always have
\begin{equation} \label{dagger}
    L\cong\{\text{closed nuclei on }L\}\subseteq\mathrm{SN} L\subseteq\mathrm N L. \tag{$\dagger$}
\end{equation}
If every Scott continuous nucleus is closed, then the first inclusion above is an equality and $\mathrm{SN} L\cong L$. We briefly record when the second inclusion is an equality. For this we use the following 
result of Simmons, where we recall that a nucleus on $L$ is \emph{open} if it is of the form
\[
    \mathfrak o_u(a)=a\to u
\]
for some $u\in L$.

\begin{proposition}[{\cite[Lem.~7(i)]{Sim78}}]
    Every nucleus on $L$ is the join of the nuclei $\mathfrak o_a\wedge \mathfrak c_b$ beneath it. \label{prop:joins-locally-closed}
\end{proposition}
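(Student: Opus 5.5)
We will work with the open nucleus in the form $\mathfrak o_u(x)=u\to x$. Read literally, $x\mapsto x\to u$ is order reversing in $x$ and so is not a nucleus; this reading is the intended one. The plan is to show that the particular nuclei $\mathfrak c_{ja}\wedge\mathfrak o_a$, one for each $a\in L$, already lie beneath $j$, and that their join dominates $j$. Then $j$ is the join of these. It is therefore also the join of all nuclei of the form $\mathfrak o_a\wedge\mathfrak c_b$ beneath it, since every such nucleus is below $j$ and the chosen ones already reach $j$.

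\emph{Step 1: each $\mathfrak c_{ja}\wedge\mathfrak o_a$ lies below $j$.} Meets in $\mathrm N L$ are computed pointwise, so for $x\in L$ we get
\[
(\mathfrak c_{ja}\wedge\mathfrak o_a)(x)=(ja\vee x)\wedge(a\to x)=\bigl(ja\wedge(a\to x)\bigr)\vee\bigl(x\wedge(a\to x)\bigr)
\]
by distributivity. The second joinand is at most $x\le jx$. For the first, we use $a\to x\le j(a\to x)$ and the fact that $j$ preserves finite meets:
\[
ja\wedge(a\to x)\le ja\wedge j(a\to x)=j\bigl(a\wedge(a\to x)\bigr)\le jx.
\]
Hence $(\mathfrak c_{ja}\wedge\mathfrak o_a)(x)\le jx$ for all $x$, as required.

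\emph{Step 2: the join dominates $j$.} Let $k=\bigvee_{a\in L}(\mathfrak c_{ja}\wedge\mathfrak o_a)$, with the join taken in $\mathrm N L$. This join is at least the pointwise supremum of the family. Evaluating the member indexed by $a$ at $x=a$ gives $(ja\vee a)\wedge(a\to a)=ja\wedge 1=ja$. So $k(a)\ge ja$ for every $a$, that is, $j\le k$. Combined with Step 1, $k\le j$, and so $k=j$.

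The only step requiring care is Step 1, in particular choosing the closed part to be $\mathfrak c_{ja}$ so that the cross term $ja\wedge(a\to x)$ can be absorbed using meet preservation of $j$. We also note that the join in $\mathrm N L$ need not be pointwise, but only the easy inequality "join $\ge$ pointwise supremum" is used.
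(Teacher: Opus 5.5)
Your proof is correct and complete. The paper gives no proof of its own here and simply cites Simmons; your decomposition $j=\bigvee_{a\in L}(\mathfrak c_{ja}\wedge\mathfrak o_a)$ is the standard argument behind that citation, and you are right that the paper's formula $\mathfrak o_u(a)=a\to u$ is a typo for $u\to a$, since the former is order-reversing and not inflationary.
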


\begin{theorem}\label{thm:SNL=NL}
    Let $L$ be a frame. Then $\mathrm{SN} L = \mathrm N L$ iff each open nucleus is Scott continuous.
\end{theorem}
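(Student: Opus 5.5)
The forward direction is immediate: if $\mathrm{SN} L = \mathrm N L$, then every nucleus is Scott continuous, and in particular every open nucleus $\mathfrak o_u$ is.

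For the converse, assume every open nucleus is Scott continuous and let $j\in\mathrm N L$ be arbitrary. The plan is to write $j$ as a join in $\mathrm N L$ of nuclei that are already known to be Scott continuous. By \cref{prop:joins-locally-closed}, we have
\[
    j=\bigvee\{\mathfrak o_a\wedge\mathfrak c_b \mid \mathfrak o_a\wedge\mathfrak c_b\le j\},
\]
with the join taken in $\mathrm N L$. Each $\mathfrak c_b$ is Scott continuous, since closed nuclei preserve nonempty joins, as noted just before (\ref{dagger}). Each $\mathfrak o_a$ is Scott continuous by hypothesis.

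By \cref{lem:SNL-subframe}, $\mathrm{SN} L$ is a subframe of $\mathrm N L$. It is therefore closed under binary meets, so every $\mathfrak o_a\wedge\mathfrak c_b$ lies in $\mathrm{SN} L$. It is also closed under arbitrary joins computed in $\mathrm N L$, so the join displayed above lies in $\mathrm{SN} L$ as well. Hence $j\in\mathrm{SN} L$, which gives $\mathrm{SN} L=\mathrm N L$.

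The only point needing care is that the join in \cref{prop:joins-locally-closed} is the join in $\mathrm N L$. Since a subframe inclusion preserves arbitrary joins, that join agrees with the join in $\mathrm{SN} L$, so closure under it is legitimate. Apart from this, the argument is a direct combination of \cref{lem:SNL-subframe} with Simmons' decomposition, and I expect no real obstacle.
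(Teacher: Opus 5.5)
Your proof is correct and follows the paper's argument: closed nuclei are Scott continuous, open ones by hypothesis, and closure of the subframe $\mathrm{SN} L$ under finite meets and arbitrary joins in $\mathrm N L$ finishes the argument via Simmons' decomposition (\cref{prop:joins-locally-closed}). Your remark that the join in that decomposition is the one in $\mathrm N L$ is exactly what the subframe property of \cref{lem:SNL-subframe} handles.
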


\begin{proof}
The 
forward implication is immediate. Conversely, suppose that every open nucleus is Scott continuous. By \cref{lem:SNL-subframe}, $\mathrm{SN} L$ 
is a subframe of $\mathrm N L$. Since closed nuclei are Scott continuous, each nucleus of the form $\mathfrak o_a\wedge\mathfrak c_b$ is Scott continuous, and hence so is every join of such nuclei. Now apply \cref{prop:joins-locally-closed} 
to conclude that every nucleus on $L$ is Scott continuous.
\end{proof}

We now turn our attention to the first inclusion in \eqref{dagger}. Recall (see, e.g., \cite[p.~74]{PP12}) that a frame $L$ is \emph{fit} if $a\nleq b$ implies that there is $c\in L$ such that $a\vee c=1$ and $c\to b\nleq b$. For fit frames, Sexton and Simmons proved that the first inclusion in \eqref{dagger} is always an equality:
\begin{theorem}[{\cite[Thm.~10.6]{SS06}}]
    \label{thm:SS}
    If $L$ is a fit frame, then each Scott continuous nucleus on $L$ is closed.
\end{theorem}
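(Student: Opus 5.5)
Given a Scott continuous nucleus $j$ on a fit frame $L$, set $u:=j(0)$. The plan is to show that $j=\mathfrak c_u$. One inequality is automatic: $a\vee u\leq ja$, because $a\leq ja$ and $u=j0\leq ja$. So everything reduces to proving $ja\leq a\vee u$ for every $a\in L$.

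Passing to the closed sublocale $\mathfrak c_u L={\uparrow}u$ lets us assume $j0=0$. Two facts make this work. First, ${\uparrow}u$ is again fit, since closed sublocales of fit frames are fit. Second, $j$ restricts to a Scott continuous nucleus on ${\uparrow}u$, because directed joins in ${\uparrow}u$ are computed as in $L$. Once this reduction is made, the statement to prove is: a Scott continuous nucleus $j$ with $j0=0$ on a fit frame is the identity.

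For the main step I argue by contradiction and work with the sublocale $S=jL$. Suppose $ja\nleq a$. Apply fitness to the pair $ja\nleq a$: this gives $c$ with $ja\vee c=1$ and $c\to a\nleq a$. I would then use $c$ to locate a failure of directed-join preservation. Let $D$ be the directed set of finite joins of elements $x$ with $x\wedge c\leq a$, so that $\bigvee D=c\to a$. The aim is to compare $j(c\to a)=\bigvee j[D]$ with $a$, using $j(c\to a)\leq jc\to ja$ together with $ja\vee c=1$. From $ja\vee c=1$ we get $c\to a\leq ja\to a$, because $(c\to a)\wedge ja\leq(c\to a)\wedge(ja\vee c)$ after distributing and absorbing. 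The final piece would be the following: Scott continuity together with $j0=0$ should force $j$ to fix the elements $x$ with $x\wedge c\leq a$ in a way that yields $c\to a\leq a$. That contradicts the choice of $c$.

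I expect this last step to be the main obstacle, because Scott continuity alone says little about a single element. My first attempt would replace the direct argument with the known fit-frame characterisation that every sublocale is an intersection of open sublocales, i.e.\ every nucleus is a join of open nuclei $\mathfrak o_b$. Writing $j=\bigvee_b \mathfrak o_b$ and using $j0=0$ gives $b^*\leq 0$ for each $b$ occurring, so every such $b$ is dense. I would then exploit Scott continuity, via \cref{lem:scts-sublocale}, to show that $jL$ is closed under directed joins. For a dense non-top $b$, the set $\{x\mid x\leq b\}$ is directed with join $b$, and this should force $\mathfrak o_b$ to be trivial, or else exhibit an element of $jL$ that is a directed join of elements outside it. If this route stalls, I would fall back on the original argument of \cite[Thm.~10.6]{SS06}.
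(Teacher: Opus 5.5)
There is a genuine gap. Your reduction to $j(0)=0$ is sound: fitness is hereditary, and $j$ restricts to a Scott continuous nucleus on ${\uparrow}u$. But neither route contains a step where Scott continuity actually does any work. The paper gives no proof of its own to compare with; it only quotes Sexton and Simmons.

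In your first route, $c\to a\leq ja\to a$ does not follow from $ja\vee c=1$. Distributing gives $c\to a=((c\to a)\wedge c)\vee((c\to a)\wedge ja)\leq a\vee ja=ja$. So $c\to a\leq ja\to a$ is equivalent to $c\to a\leq a$, which is exactly what $c$ was chosen to violate. Concretely, in the three-element chain with $j$ double negation and $a=c$ the middle element, the inequality fails. Your set $D$ is just ${\downarrow}(c\to a)$, which contains its own join, so $j(\bigvee D)=\bigvee j[D]$ holds for every monotone $j$ and carries no information. The same defect sinks the second route, since ${\downarrow}b$ has top element $b$. Writing $j$ itself as $\bigvee\mathfrak o_b$ does not produce any directed family without a largest element, and only such families engage Scott continuity.

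The missing idea is to apply fitness to the closed sublocale ${\uparrow}a$ as a whole, not to $j$ or to a single pair. Fitness says: if $e\to y\leq y$ for every $e$ with $e\vee a=1$, then $a\leq y$.

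Fix $a\geq j(0)$ and set $p(y)=\bigvee\{e\to y\mid e\vee a=1\}$. Since these $e$ form a filter, this join is directed. Each $e\to y$ lies in $jL$ when $y$ does, because $j(e\to y)\leq e\to jy$. Hence Scott continuity gives $p[jL]\subseteq jL$. Also $y\leq p(y)\leq a\vee y$, the latter because $e\to y=(e\to y)\wedge(e\vee a)\leq y\vee a$.

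Now let $Z=\{z\in jL\mid z\leq a\}$. It contains $j(0)$ and is closed under joins of nonempty chains, again by Scott continuity, so Zorn's lemma gives a maximal $m\in Z$. Since $p(m)\in Z$ and $m\leq p(m)$, maximality forces $p(m)=m$. Fitness then gives $a\leq m$, so $a=m\in jL$.

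Thus ${\uparrow}j(0)\subseteq jL$, whence $jx\leq j(x\vee j0)=x\vee j0$ and $j=\mathfrak c_{j(0)}$. This argument also makes your reduction to $j(0)=0$ unnecessary.
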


\begin{remark}\leavevmode
    \cref{thm:SS} fails if fit is replaced by Hausdorff since
    double negation on the Hausdorff frame considered in \cref{example:hausdorff} is Scott continuous, but not closed. To see the latter, observe that a nucleus $j$ is closed iff $j = \mathfrak c_{j(0)}$. Therefore, a closed dense nucleus must be the identity. 
\end{remark}

As a consequence of \cref{thm:SS}, we obtain:

\begin{corollary} \label{cor:SS}
    If $L$ is a fit frame, then double negation is Scott continuous iff $L$ is boolean.
\end{corollary}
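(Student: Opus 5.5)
The argument is a direct application of \cref{thm:SS} together with the observation, made in the remark just above, that a nucleus $j$ is closed iff $j=\mathfrak c_{j(0)}$.

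\emph{Backward direction.} Suppose $L$ is boolean. Then $a^{**}=a$ for every $a\in L$, so double negation is the identity nucleus. The identity preserves all joins, so it is Scott continuous.

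\emph{Forward direction.} Suppose double negation is Scott continuous on the fit frame $L$.
\begin{itemize}
\item By \cref{thm:SS}, double negation is a closed nucleus $\mathfrak c_u$ for some $u\in L$.
\item Evaluating at $0$ gives $u=\mathfrak c_u(0)=0^{**}$.
\item Since $0^*=1$ and $1^*=0$, we have $0^{**}=0$, so $u=0$.
\item Hence $a^{**}=\mathfrak c_0(a)=a\vee 0=a$ for all $a\in L$.
\end{itemize}
Every element is therefore regular, and a frame in which every element is regular is boolean: from $(a\vee a^*)^*=a^*\wedge a^{**}=0$ we get $a\vee a^*=(a\vee a^*)^{**}=0^*=1$.

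All the real work is in \cref{thm:SS}, which we may assume. The rest is bookkeeping and should present no obstacle.
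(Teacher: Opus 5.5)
Your proof is correct and follows the paper's argument exactly: \cref{thm:SS} makes double negation closed, evaluating at $0$ forces it to be $\mathfrak c_{0^{**}}=\mathfrak c_0$ (the identity), and hence $L$ is boolean. Your explicit check that $a\vee a^*=1$ when every element is regular spells out the paper's final step $L=\mathfrak B L$.
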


\begin{proof}
    If $L$ is boolean, then double negation is the identity, and hence it is Scott continuous. Conversely, suppose double negation is Scott continuous. By \cref{thm:SS}, it is closed, and hence equal to $\mathfrak c_{0^{**}}=\mathfrak c_0$, which is the identity. Thus, double negation is the identity on $L$, so $L=\mathfrak B L$, and hence $L$ is boolean.
\end{proof}

We next 
show that every Scott continuous nucleus on the frame of opens of a sober $T_1$-space is closed.

\begin{proposition}\label{thm:scts=closed}
    If $X$ is a sober $T_1$-space, then every Scott continuous nucleus $j$ on $\Omega(X)$ is closed. 
\end{proposition}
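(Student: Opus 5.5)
We will show that $j=\mathfrak c_{u}$ where $u:=j(\varnothing)$. Since $j$ is monotone and inflationary, $U\cup u\subseteq j(U)$ for every open $U$, so the task is the reverse inclusion $j(U)\subseteq U\cup u$. Our tool is \cref{lem: key}, which says that $j^{-1}(\mathcal F_x)=\mathcal F_x$ for every $x\notin u$.

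Fix $U\in\Omega(X)$ and a point $x\in j(U)$ with $x\notin u$. We must show $x\in U$. Since $x\in j(U)$, we have $j(U)\in\mathcal F_x$, that is, $U\in j^{-1}(\mathcal F_x)$. By \cref{lem: key}, $j^{-1}(\mathcal F_x)=\mathcal F_x$, so $U\in\mathcal F_x$ and hence $x\in U$. Therefore $j(U)\setminus u\subseteq U$, which gives $j(U)\subseteq U\cup u=\mathfrak c_u(U)$.

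Combining the two inclusions, $j(U)=U\cup j(\varnothing)$ for every open $U$, so $j=\mathfrak c_{j(\varnothing)}$ is closed. No step here is a real obstacle, since the substantive work (Hofmann--Mislove together with $T_1$) is already contained in \cref{lem: key}. The only care needed is to apply the lemma solely at points outside $j(\varnothing)$, as its hypothesis requires, and to cover the points of $j(\varnothing)$ through the term $u$ in the union.
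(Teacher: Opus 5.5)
Your proposal is correct and matches the paper's proof essentially step for step. Both use the easy inclusion $\mathfrak c_{j(\varnothing)}\le j$, then apply \cref{lem: key} at each point $x\in j(U)\setminus j(\varnothing)$ to conclude that $U\in j^{-1}(\mathcal F_x)=\mathcal F_x$, hence $x\in U$.
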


\begin{proof}
    Since $\mathfrak c_{j(\varnothing)}\leq j$ always holds, it is enough to show the reverse inequality. Let $U\in\Omega(X)$ and $x \in j(U)$. If $x \notin j(\varnothing)$, then  
    $j^{-1}(\mathcal F_x)=\mathcal F_x$ by \cref{lem: key}. Thus, since $U \in j^{-1}(\mathcal F_x)$, we must have 
    $x\in U$, so $j(U)\subseteq U\cup j(\varnothing)=\mathfrak c_{j(\varnothing)}(U)$. 
\end{proof}

\begin{remark}
As we pointed out at the beginning of \cref{sec: existing lit}, the above proposition is not a consequence of \cref{thm:SS}. 
It can however be derived from existing results in the literature. Indeed, by \cite[Thm.~3.1.4]{Esc98}, 
Scott continuous sublocales of spatial frames are spatial. Consequently, each Scott continuous sublocale of $\Omega(X)$ is induced by a subspace of $X$ (see, e.g., \cite[Prop.~VI-2.2.1]{PP12}), which may be assumed to be sober since  $X$ is sober. Hofmann and Lawson \cite[Prop.~3.3]{HL84} showed that if $f : X \to Y$ is a continuous map between sober spaces, then the right adjoint of $\Omega(f) : \Omega(Y) \to \Omega(X)$ preserves directed joins iff $f$ is a \emph{proper map}; that is,
\begin{enumerate}[(i)]
    \item $f^{-1}(K)$ is compact for each compact saturated $K \subseteq Y$;
    \item the downset in the specialization order of $f(C)$ is closed for each closed $C \subseteq X$. \label{closed-downset}
\end{enumerate}
Consequently, the Scott continuous sublocales of $\Omega(X)$ correspond to sober subspaces $Y$ of $X$ whose subspace embedding is proper (see, e.g., \cite[Cor.~3.2.4]{Esc98}). If $X$ is $T_1$, the specialization order becomes equality, so condition \ref{closed-downset} forces $Y$ to be a closed subset of $X$. Thus, if $X$ is sober and $T_1$, then each Scott continuous nucleus on $\Omega(X)$ is closed, yielding \cref{thm:scts=closed}. We find our proof above much shorter. 
\end{remark}

We are ready for our second main result, that among $T_0$-spaces, the property that every Scott continuous nucleus on $\Omega(X)$ is closed 
singles out
sober $T_1$-spaces.
For this, recall (see, e.g., \cite[Rem.~III.6.2.2]{PP12}) that, for each $U\in\Omega(X)$, the closed sublocale
$\mathfrak c_U\Omega(X)=[U,X]$ is 
isomorphic to $\Omega(X\setminus U)$. 

\begin{theorem}\label{thm: 2nd main}
    Let $X$ be a $T_0$-space. Then $X$ is sober and $T_1$ iff every Scott continuous nucleus on $\Omega(X)$ is closed.
\end{theorem}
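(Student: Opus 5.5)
The forward implication is exactly \cref{thm:scts=closed}, so the work is in the converse. Assume $X$ is $T_0$ and every Scott continuous nucleus on $\Omega(X)$ is closed. I would prove sobriety and $T_1$ in one stroke by showing that \emph{every irreducible closed set $C\subseteq X$ is a singleton}. Sobriety then follows, since each such $C$ is $\{x\}=\cl\{x\}$, and uniqueness of the generic point is automatic. The $T_1$ property also follows: $\cl\{y\}$ is irreducible closed, hence a singleton, so $\cl\{y\}=\{y\}$ for every $y$.

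Fix an irreducible closed set $C$ and put $P=X\setminus C$. Irreducibility says exactly that $P$ is a prime element of $\Omega(X)$: it differs from $X$, and $U\cap V\subseteq P$ implies $U\subseteq P$ or $V\subseteq P$. The first step is to check that $S=\{P,X\}$ is a sublocale. It is closed under meets. For implications, $U\to P=X$ when $U\subseteq P$. When $U\not\subseteq P$, any $W$ with $W\cap U\subseteq P$ satisfies $W\subseteq P$ by primeness, so $U\to P=P$. Equivalently, $S$ is the fixpoint set of the nucleus $j$ given by $j(U)=P$ if $U\subseteq P$ and $j(U)=X$ otherwise. Since $S$ is finite, $j$ is Scott continuous by \cref{lem:finite-scts}.

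By hypothesis $j$ is closed. Since a closed nucleus satisfies $j=\mathfrak c_{j(\varnothing)}$ and $j(\varnothing)=P$, its fixpoint set is $S=[P,X]$. By the recalled fact, $[P,X]\cong\Omega(X\setminus P)=\Omega(C)$. Hence the subspace $C$ has only the two opens $\varnothing$ and $C$, and it is nonempty because $P\neq X$. A subspace of a $T_0$-space is $T_0$, and an indiscrete $T_0$-space has at most one point. So $C$ is a singleton, completing the argument.

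The main point needing care is the identification of the two-element sublocale with $[P,X]$ and the passage to $\Omega(C)$. This only uses the equality $j=\mathfrak c_{j(0)}$ for closed nuclei, noted in the remark following \cref{thm:SS}, together with the isomorphism $\mathfrak c_U\Omega(X)\cong\Omega(X\setminus U)$, so I do not expect a genuine obstacle.
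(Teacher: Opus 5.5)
Your proof is correct. Its core is the same as the paper's. For a meet-irreducible open $P$, the boolean sublocale $\mathfrak b_P\Omega(X)$ is $\{P,X\}$, so $\mathfrak b_P$ is Scott continuous by finiteness. It is therefore closed and equal to $\mathfrak c_P$, which forces $\Omega(X\setminus P)$ to be indiscrete and hence $X\setminus P$ to be a single point by $T_0$.

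The difference is in how you handle sobriety. The paper first passes to the soberification $Y$. It uses $\Omega(Y)\cong\Omega(X)$ and the fact that a $T_0$-space whose soberification is $T_1$ is homeomorphic to it. It then runs the argument only for $U=X\setminus\cl\{x\}$ in a sober space, which yields only the $T_1$ property. You instead run the argument in $X$ itself, for the complement of an arbitrary irreducible closed set. Sobriety and $T_1$ then follow together from the statement ``every irreducible closed set is a singleton.''

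Since points of $Y$ correspond to irreducible closed subsets of $X$, your proof is essentially the paper's argument transported back along $\Omega(Y)\cong\Omega(X)$. What your version gains is self-containment: it needs no facts about soberification. The cost is negligible: you check primeness of $X\setminus C$ from irreducibility of an arbitrary $C$, rather than the paper's direct computation for a point closure.
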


\begin{proof}
    ($\Rightarrow$). This follows from \cref{thm:scts=closed}.

    ($\Leftarrow$). 
    Suppose every Scott continuous nucleus on $\Omega(X)$ is closed. Let $Y$ be the soberification of $X$. If $Y$ is $T_1$, then $X\cong Y$ (see, e.g., \cite[Note~VI-2.3.2]{PP12}), and hence $X$ is sober and $T_1$. Since $\Omega(Y)\cong\Omega(X)$, the hypothesis also holds for $Y$. It is therefore enough to prove that every sober space satisfying the hypothesis is $T_1$. Thus, we assume that $X$ is sober and prove that $X$ is $T_1$. 
    Let $x \in X$ and let $U = X \setminus \cl\{x\}$. 
    
    \begin{claim} \label{claim}
    $\mathfrak b_U \Omega(X) = \{U,X\}$.
    \end{claim}
    \begin{proof}[Proof of the claim]\renewcommand{\qedsymbol}{$\diamond$}
        Since $U$ is meet-irreducible in $\Omega(X)$, the claim follows from the observations in \cite[p.~42]{PP12}. However, it is easy to give a direct proof as well. 
        Let $V \in \Omega(X)$. If $V\subseteq U$, then $V\to U=X$, so
    $\mathfrak b_U(V)=U$. Let $V\nsubseteq U$. We show that $\mathfrak b_U(V) = X$. From $V\nsubseteq U$ it follows that $V\cap\cl\{x\}\neq\varnothing$,
    so $x\in V$. Now let $W \in \Omega(X)$ with 
    $V\cap W\subseteq U$. Since $x \in V$ and $x\notin U$, we have $x \notin W$. Thus, $W \cap \cl \{x\} = \varnothing$,
    so $W \subseteq U$. 
    Since $V\to U = \bigcup \{ W \in \Omega(X) \mid V \cap W \subseteq U \}$, we conclude that
    $V\to U=U$, and
    hence $\mathfrak b_U(V)=X$.
    \end{proof}
    
    By \cref{claim}, $\mathfrak b_U \Omega(X)$ is finite, so $\mathfrak b_U$ is Scott continuous by \cref{lem:finite-scts}. Therefore, $\mathfrak b_U$ is closed
    by assumption. Since $\mathfrak b_U(\varnothing)=U$, we have $\mathfrak b_U=\mathfrak c_U$.
    Thus, $\mathfrak c_U \Omega(X)=\{U,X\}$. But $\mathfrak c_U \Omega(X)\cong\Omega(\cl\{x\})$, so the subspace topology on 
    $\cl\{x\}$ is trivial. Since $X$ is $T_0$, 
    we conclude that $\cl\{x\} = \{x\}$. Thus, every
    singleton is closed, and hence $X$ is~$T_1$.
\end{proof}

The above theorem suggests regarding frames in which every Scott continuous nucleus is closed  as pointfree analogs of sober $T_1$-spaces. 

\section{Scott continuity for boolean nuclei}

We now extend \cref{thm-new-main} 
to arbitrary boolean nuclei, yielding our third main result. This requires the following:

\begin{lemma} \label{lem:bU-restriction}
    Let $L$ be a frame and $a\in L$. Then $\mathfrak b_a$ is Scott continuous 
    iff its restriction to $\mathfrak c_aL$ is Scott continuous.
\end{lemma}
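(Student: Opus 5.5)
Our plan is to exploit the fact that $\mathfrak b_a$ already factors through $\mathfrak c_a L = \{x\in L\mid a\le x\}$ (the upset ${\uparrow}a$), and to compare the two fixpoint sets using \cref{lem:scts-sublocale}.

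We first record some elementary facts. Since $a\le \mathfrak b_a(x)$ for every $x$, the image $\mathfrak b_a L$ is contained in ${\uparrow}a$. Next, $\mathfrak c_a L={\uparrow}a$ is itself a frame: its joins of nonempty families agree with those of $L$, and its joins of empty families give $a$. For $x\ge a$ the implication $x\to a$ computed in ${\uparrow}a$ differs from the one in $L$. If $x\to a$ denotes the implication in $L$, then the implication in ${\uparrow}a$ is $(x\to a)\vee a = x\to a$, since $a\le x\to a$ already. Hence the restriction of $\mathfrak b_a$ to ${\uparrow}a$ is the boolean nucleus $\mathfrak b_a$ of the frame ${\uparrow}a$ (with respect to its bottom $a$, i.e.\ double negation there). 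In particular it is a nucleus on ${\uparrow}a$, and its fixpoint set equals $\mathfrak b_a L$.

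Now we apply \cref{lem:scts-sublocale} on both sides. The nucleus $\mathfrak b_a$ on $L$ is Scott continuous iff every directed $S\subseteq \mathfrak b_aL$ has $\bigvee S\in\mathfrak b_aL$. The restricted nucleus on ${\uparrow}a$ is Scott continuous iff every directed $S\subseteq \mathfrak b_aL$ has its join in ${\uparrow}a$ lying in $\mathfrak b_aL$. Directed sets are nonempty, so these joins coincide with the joins in $L$. The two conditions are therefore literally the same, which proves the equivalence.

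The only point requiring care is checking that the restriction is genuinely a nucleus on ${\uparrow}a$ whose fixpoints are exactly $\mathfrak b_aL$. This is needed so that \cref{lem:scts-sublocale} applies in the frame ${\uparrow}a$. Alternatively, one can avoid this and argue directly. For directed $S\subseteq L$, put $S'=\{s\vee a\mid s\in S\}$, which is directed in ${\uparrow}a$. Then $\mathfrak b_a(s\vee a)=\mathfrak b_a(s)$, because $(s\vee a)\to a=(s\to a)\wedge(a\to a)=s\to a$. Also $\bigvee S'=a\vee\bigvee S$. Preservation of $\bigvee S$ by $\mathfrak b_a$ thus reduces to preservation of $\bigvee S'$ by the restriction. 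The converse is trivial, since directed subsets of ${\uparrow}a$ are directed subsets of $L$ with the same join. I would present this direct computation as the main proof, since it is the cleanest.
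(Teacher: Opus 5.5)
Your proposal is correct, and the direct computation you plan to present as the main proof is essentially the paper's argument: the trivial direction uses that nonempty joins in ${\uparrow}a$ agree with those in $L$, and the other direction uses $\mathfrak b_a\circ\mathfrak c_a=\mathfrak b_a$ (which you verify via $(s\vee a)\to a=s\to a$) together with $\mathfrak c_a$ preserving nonempty joins. Your alternative fixpoint argument via \cref{lem:scts-sublocale} is also valid. One wording slip there: the implication in ${\uparrow}a$ does not differ from the one in $L$; your own computation shows that it coincides.
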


\begin{proof}
    ($\Rightarrow$). Suppose that $\mathfrak b_a$ is Scott continuous. 
    Let $S\subseteq\mathfrak c_aL$ be directed. Since $\mathfrak c_aL=[a,1]$, nonempty joins in $\mathfrak c_aL$ agree with those in $L$. Therefore,
    \[
        \mathfrak b_a\!\left(\bigvee_{\mathfrak c_aL}S\right)
        =\mathfrak b_a\!\left(\bigvee_LS\right)
        =\bigvee_L\mathfrak b_a[S]
        =\bigvee_{\mathfrak c_aL}\mathfrak b_a[S].
    \]
    Thus, the restriction of $\mathfrak b_a$ to $\mathfrak c_aL$ is Scott continuous.

    ($\Leftarrow$). 
    Suppose that the restriction of $\mathfrak b_a$ to $\mathfrak c_aL$ is Scott continuous. Let $S\subseteq L$ be directed. Then $\mathfrak c_a[S]\subseteq\mathfrak c_aL$ is directed. Since $\mathfrak c_a$ preserves nonempty joins and $\mathfrak b_a\circ\mathfrak c_a=\mathfrak b_a$ (the latter is easy to verify, but also follows from \cite[Lem.~3.6]{Mac81}),
    we have
    \[
        \mathfrak b_a\!\left(\bigvee_LS\right)
        =\mathfrak b_a \mathfrak c_a\!\left(\bigvee_LS\right)
        =\mathfrak b_a\!\left(\bigvee_{\mathfrak c_aL}\mathfrak c_a[S]\right)
        =\bigvee_{\mathfrak c_aL}\mathfrak b_a[\mathfrak c_a[S]]
        =\bigvee_L\mathfrak b_a[S].
    \]
    Thus, $\mathfrak b_a$ is Scott continuous.
\end{proof}

\begin{theorem} \label{cor:bU-scts-iff-discrete}
    Let $X$ be a sober $T_1$-space. For $U\in\Omega(X)$, 
    the following are equivalent.
    \begin{enumerate}[cref=theorems]
        \item $\mathfrak b_U$ is Scott continuous; \label{bu-scts-1}
        \item $\mathfrak b_U=\mathfrak c_U$; \label{bu-scts-2}
        \item $X\setminus U$ is discrete. \label{bu-scts-3}
    \end{enumerate}
\end{theorem}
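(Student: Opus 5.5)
I will prove the cycle (a)$\Rightarrow$(b)$\Rightarrow$(c)$\Rightarrow$(a), using \cref{thm:scts=closed}, \cref{lem:bU-restriction}, and \cref{thm-new-main} applied to the closed subspace $C:=X\setminus U$.

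For (a)$\Rightarrow$(b): if $\mathfrak b_U$ is Scott continuous, then by \cref{thm:scts=closed} it is closed. A closed nucleus $j$ equals $\mathfrak c_{j(\varnothing)}$. Since $\varnothing\to U=X$ and $X\to U=U$, we get $\mathfrak b_U(\varnothing)=U$, and hence $\mathfrak b_U=\mathfrak c_U$.

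For (b)$\Rightarrow$(c): if $\mathfrak b_U=\mathfrak c_U$, then every element of $\mathfrak c_U\Omega(X)=[U,X]$ is fixed by $\mathfrak b_U$. We transport along the isomorphism $[U,X]\cong\Omega(C)$, $V\mapsto V\cap C$. Under it, the restriction of $\mathfrak b_U$ to $[U,X]$ should become the double negation nucleus on $\Omega(C)$. The reason is that in the frame $[U,X]$ the bottom is $U$, the Heyting implication is that of $\Omega(X)$ (the interval is closed under $\to$ when the bottom sits below the target), and so the pseudocomplement of $V$ is $V\to U$. Hence $\mathfrak b_U$ restricted to $[U,X]$ is exactly double negation there. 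So (b) says that double negation on $\Omega(C)$ is the identity. Now $C$ is closed in a sober $T_1$-space, so $C$ is sober and $T_1$, and \cref{thm-new-main} shows that $C$ is discrete.

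For (c)$\Rightarrow$(a): if $C$ is discrete, then $\Omega(C)$ is boolean, so double negation on $\Omega(C)$ is the identity and hence Scott continuous. By the identification above, the restriction of $\mathfrak b_U$ to $\mathfrak c_U\Omega(X)$ is Scott continuous. \cref{lem:bU-restriction} then gives that $\mathfrak b_U$ is Scott continuous. Alternatively, one can show (b) directly and then use that closed nuclei are Scott continuous.

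The main obstacle is the identification of $\mathfrak b_U$ restricted to $[U,X]$ with double negation on $\Omega(C)$. This needs two checks: that $V\to U$ lies in $[U,X]$, and that the interval isomorphism preserves implication. The second holds because it is a frame isomorphism. The remaining ingredient, that closed subspaces of sober spaces are sober, is standard.
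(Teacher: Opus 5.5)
Your proof is correct and follows essentially the paper's route: identify the restriction of $\mathfrak b_U$ to $\mathfrak c_U\Omega(X)\cong\Omega(X\setminus U)$ with double negation there, then invoke \cref{lem:bU-restriction} and \cref{thm-new-main} for the closed subspace $X\setminus U$, which is again sober and $T_1$. The only difference is that you obtain (a)$\Rightarrow$(b) directly from \cref{thm:scts=closed} together with $\mathfrak b_U(\varnothing)=U$. The paper instead transports all three conditions at once, using $\mathfrak b_U\circ\mathfrak c_U=\mathfrak b_U$ to see that (b) is equivalent to the restriction being the identity.
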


\begin{proof}
    The bottom element of $\mathfrak c_U\Omega(X)$ is $U$, so the pseudocomplement of $V\in\mathfrak c_U\Omega(X)$ is $V\to U$.
    Thus, double negation on $\mathfrak c_U\Omega(X)$ is precisely the restriction $\mathfrak b_U : \mathfrak c_U\Omega(X) \to \mathfrak c_U\Omega(X)$. 
    By \cref{lem:bU-restriction}, \itemref{bu-scts-1} holds iff double negation on $\mathfrak c_U \Omega(X)$ is Scott continuous.
    We show that \itemref{bu-scts-2} holds iff double negation on $\mathfrak c_U\Omega(X)$ is the identity. 
    If $\mathfrak b_U=\mathfrak c_U$, then the restriction of
    $\mathfrak b_U$ to $\mathfrak c_U \Omega(X)$ is the identity. 
    Conversely, if this restriction is the identity, then $\mathfrak b_U(V)=\mathfrak b_U(\mathfrak c_U(V))=\mathfrak c_U(V)$ for each $V\in \Omega(X)$.
    Thus, under the isomorphism $\mathfrak c_U\Omega(X)\cong\Omega(X\setminus U)$, 
    \itemref{bu-scts-1}--\itemref{bu-scts-3} become precisely the three equivalent conditions of \cref{thm-new-main}, and the result follows.
\end{proof}

\begin{corollary}
    Let $X$ be a compact sober $T_1$-space and $U \in \Omega(X)$. Then $\mathfrak b_U$ is Scott continuous iff 
    $X \setminus U$ is finite.
\end{corollary}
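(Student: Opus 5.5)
The plan is to derive this from \cref{cor:bU-scts-iff-discrete}, in the same way that \cref{cor: compact case} was derived from \cref{thm-new-main}. Since $X$ is sober and $T_1$, that theorem shows $\mathfrak b_U$ is Scott continuous iff the subspace $X\setminus U$ is discrete. It therefore suffices to show that, in our situation, $X\setminus U$ is discrete iff it is finite.

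First, $X\setminus U$ is compact, since it is a closed subset of the compact space $X$. Second, it is $T_1$, because the $T_1$ property passes to subspaces. A compact discrete space is finite: the singletons form an open cover, and any finite subcover must consist of all of them. Conversely, a finite $T_1$-space is discrete, because every subset is a finite union of closed singletons and hence closed. This proves the equivalence.

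If one wishes to stay closer to the paper's formalism, one could instead note that $\mathfrak c_U\Omega(X)\cong\Omega(X\setminus U)$, where $X\setminus U$ is compact, $T_1$, and sober (closed subspaces of sober spaces are sober), and then apply \cref{cor: compact case} together with \cref{lem:bU-restriction}. The direct route above is simpler, though, and I expect no real obstacle. The only points that need checking are that closed subspaces inherit compactness and the $T_1$ property.
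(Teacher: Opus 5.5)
Your proposal is correct and matches the paper's proof: reduce via \cref{cor:bU-scts-iff-discrete} to discreteness of $X\setminus U$, then use that this closed subspace of a compact space is compact, hence discrete iff finite. You also spell out the step that finite implies discrete by appealing to the $T_1$ property of the subspace, which the paper leaves implicit.
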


\begin{proof}
    By \cref{cor:bU-scts-iff-discrete}, $\mathfrak b_U$ is Scott continuous iff
$X\setminus U$ is discrete. Since $X\setminus U$ is a closed subset of a compact space, it is compact. Thus, $X\setminus U$ is discrete iff $X\setminus U$ is finite.
\end{proof}

\section{Compactness and \texorpdfstring{$j$}{j}-units}

As we saw in \cref{thm:scts=closed},
Scott continuous nuclei on the frame of opens of a sober $T_1$-space are closed. In particular, if double negation is Scott continuous, then it is the identity, forcing the space $X$ to be discrete (see \cref{thm-new-main}). In the compact case, this result can be sharpened to $X$ being finite (see \cref{cor: compact case}).
More generally, we prove that if a frame $L$ has a Scott open filter contained in
$\mathfrak b_a^{-1}(1)=\{b\in L\mid \mathfrak b_a(b)=1\}$, then $\mathfrak b_a$ is Scott continuous iff $\mathfrak b_aL$ is finite.

We recall (see \cite{KM07}) that a \emph{unit} is a compact dense element of a frame $L$. 
In \cite{BB+26}, this notion was generalized to an \emph{S-unit}; that is, an element $a \in L$ such that 
\[ \label{S-unit}\tag{$\ddagger$}
\{b \in L \mid a \text{ is way below }b\}
\]
is a Scott open filter consisting of dense elements.
The next definition provides further generalization.

\begin{definition}
    Let $j \in \mathrm N L$. A \emph{$j$-unit} is a Scott open filter $F \subseteq L$ such that $F \subseteq j^{-1}(1)$.
\end{definition}

In \cite[Lem.~3.4]{Joh85} it was observed that $jL$ is compact iff $j^{-1}(1)$ is a Scott open filter. This together with \cite[Prop.~II-2.1]{GH+03} yields the following:

\begin{lemma}\label{lem:j-unit-iff-jL-compact}
    Let $L$ be a frame and $j \in \mathrm{SN} L$. The following equivalent. 
    \begin{enumerate}[cref=lemma]
        \item $jL$ is compact; \label{j-unit-1}
        \item $j^{-1}(1)$ is a Scott open filter;\label{j-unit-2}
        \item $L$ has a $j$-unit.\label{j-unit-3}
    \end{enumerate}
\end{lemma}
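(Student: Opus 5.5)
I will prove the cycle (a)$\Rightarrow$(b)$\Rightarrow$(c)$\Rightarrow$(a). The first implication is the cited observation of Johnstone \cite[Lem.~3.4]{Joh85}. Still, it is worth recording the direct argument, since it shows where compactness enters. Since $j$ preserves finite meets and is monotone, $j^{-1}(1)$ is a filter. Now let $S\subseteq L$ be directed with $j(\bigvee S)=1$. The join of $j[S]$ computed in $jL$ is $j(\bigvee j[S])\geq j(\bigvee S)=1$. So $\{j(s)\mid s\in S\}$ is a directed cover of the top element of $jL$. Compactness of $jL$ then gives some $s$ with $j(s)=1$, so $j^{-1}(1)$ is Scott open. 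Note that this direction does not even use Scott continuity of $j$.

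The implication (b)$\Rightarrow$(c) is immediate: $F=j^{-1}(1)$ is itself a $j$-unit.

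The substantive step is (c)$\Rightarrow$(a), and this is where $j\in\mathrm{SN}L$ is used. Let $F\subseteq j^{-1}(1)$ be a Scott open filter. By \cite[Prop.~II-2.1]{GH+03}, equivalently the implication \itemref{spatial-cond-1}$\Rightarrow$\itemref{spatial-cond-2} of \cref{thm:spatial}, whose proof does not use spatiality, the set $j^{-1}(F)$ is a Scott open filter. To show $jL$ is compact, take a directed $S\subseteq jL$ whose $jL$-join is $1$, i.e.\ $j(\bigvee S)=1$. I first claim that $1\in j^{-1}(F)$; this holds because $j(1)=1\in F$, as $F$ is nonempty (it is a filter). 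Next, by Scott continuity, $\bigvee_L S=\bigvee j[S]=j(\bigvee S)=1$, since $S\subseteq jL$. Hence $\bigvee S=1\in F$. Because $F$ is Scott open, some $s\in S$ lies in $F\subseteq j^{-1}(1)$, so $s=j(s)=1$. Thus $jL$ is compact.

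The only real obstacle is noticing that Scott continuity of $j$ lets one compute joins of directed subsets of $jL$ in $L$ (as in \cref{lem:scts-sublocale}), which transfers the Scott openness of $F$ to compactness of $jL$. Consequently, the detour through $j^{-1}(F)$ is not even needed.
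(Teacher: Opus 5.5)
Your proof is correct, and your arguments for (a)$\Rightarrow$(b) and (b)$\Rightarrow$(c) are what the paper gets from Johnstone's lemma plus the trivial observation that $j^{-1}(1)$ is itself a $j$-unit. Where you differ is in how Scott continuity reaches condition (c).

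The paper gives no written argument beyond citing Johnstone and the fact that Scott continuous maps pull back Scott open filters. Its implicit route is (c)$\Rightarrow$(b)$\Rightarrow$(a). For a $j$-unit $F$, the preimage $j^{-1}(F)$ is Scott open, and $j^{-1}(F)=j^{-1}(1)$ for any nucleus: $1\in F$ gives $j^{-1}(1)\subseteq j^{-1}(F)$, and $j(b)\in F\subseteq j^{-1}(1)$ gives $j(b)=jj(b)=1$. Johnstone's (b)$\Rightarrow$(a), which holds for every nucleus, then finishes.

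You instead go (c)$\Rightarrow$(a) directly. You use Scott continuity only in the form that $jL$ is closed under directed joins in $L$, and then apply the Scott openness of $F$ itself. So the detour through $j^{-1}(F)$ that you set aside is exactly the paper's route, once it is completed by the identity $j^{-1}(F)=j^{-1}(1)$. Your sentence showing $1\in j^{-1}(F)$ is a leftover from that route and plays no role in your argument.

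Your version is self-contained. It makes visible that (a)$\Leftrightarrow$(b) holds for arbitrary nuclei, and that Scott continuity is needed only to pass from an arbitrary $j$-unit to compactness of $jL$. The paper's version isolates the same dependence differently: since every $j$-unit pulls back to $j^{-1}(1)$ for any nucleus, Scott continuity enters solely through the pullback property of Scott open filters.
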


\begin{theorem}
    Let $L$ be a frame and $a\in L$.
    \begin{enumerate}[cref=theorem]
        \item If $L$ has a $\mathfrak b_a$-unit, then $\mathfrak b_a$ is Scott
        continuous iff $\mathfrak b_aL$ is finite. 
        \label{thm:j-unit-scts-iff-finite}
        
        \item If $L$ has an $S$-unit, then the double negation nucleus is
        Scott continuous iff $\mathfrak B L$ is finite.\label{thm:s-unit-scts-iff-finite}
    \end{enumerate}
\end{theorem}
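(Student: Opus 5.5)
The plan is to derive \itemref{thm:s-unit-scts-iff-finite} as the special case $a=0$ of \itemref{thm:j-unit-scts-iff-finite}. So essentially all the work goes into \itemref{thm:j-unit-scts-iff-finite}, and the main tool there is \cref{lem:j-unit-iff-jL-compact}.

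For \itemref{thm:j-unit-scts-iff-finite}, the backward implication is \cref{lem:finite-scts}: if $\mathfrak b_aL$ is finite, then $\mathfrak b_a$ is Scott continuous. For the forward implication, suppose $\mathfrak b_a$ is Scott continuous, i.e. $\mathfrak b_a\in\mathrm{SN} L$. Since $L$ has a $\mathfrak b_a$-unit, \cref{lem:j-unit-iff-jL-compact} gives that the frame $\mathfrak b_aL$ is compact. As recalled in the introduction, $\mathfrak b_aL$ is a boolean frame. It therefore remains to show that a compact boolean frame $B$ is finite.

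I expect this last step to be the only real content, and I would argue it as follows. First, every element $c\in B$ is compact. Indeed, if $c\le\bigvee S$, then $1=\neg c\vee\bigvee S$ with $\neg c$ the complement of $c$. Compactness of $1$ gives a finite $S_0\subseteq S$ with $1=\neg c\vee\bigvee S_0$. Meeting both sides with $c$ yields $c\le\bigvee S_0$. Next, B satisfies the ascending chain condition: if $c_1<c_2<\cdots$ were strictly increasing, then $\bigvee_n c_n$ would be compact, hence below some $c_n$, a contradiction. Taking complements turns descending chains into ascending ones, so $B$ also satisfies the descending chain condition. Consequently every nonzero element lies above an atom, and $B$ is atomic. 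The atoms are pairwise disjoint and their join is $1$. By compactness of $1$, finitely many atoms already join to $1$, and since atoms are pairwise disjoint, these are all the atoms. Hence $B$ has finitely many atoms, and being atomic and complete, $B$ is finite.

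For \itemref{thm:s-unit-scts-iff-finite}, double negation is the boolean nucleus $\mathfrak b_0$, because $(b\to0)\to0=b^{**}$, and $\mathfrak B L=\mathfrak b_0L$. Let $a$ be an $S$-unit. By \eqref{S-unit}, the set $F$ of elements $b$ with $a$ way below $b$ is a Scott open filter consisting of dense elements. Dense means $b^*=0$, i.e. $\mathfrak b_0(b)=b^{**}=1$, so $F\subseteq\mathfrak b_0^{-1}(1)$. Thus $F$ is a $\mathfrak b_0$-unit, and applying \itemref{thm:j-unit-scts-iff-finite} with $a=0$ finishes the proof.
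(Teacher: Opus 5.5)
Your proposal is correct and follows the paper's proof: you get (a) from \cref{lem:j-unit-iff-jL-compact} and \cref{lem:finite-scts}, and (b) as the case $a=0$, since the filter in \eqref{S-unit} is a $\mathfrak b_0$-unit. The only difference is that you prove explicitly, and soundly, that a compact boolean frame is finite, which the paper takes as standard; one small fix is to not reuse the letter $a$ for the $S$-unit in (b), since you then apply (a) with $a=0$.
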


\begin{proof}
     \itemref{thm:j-unit-scts-iff-finite}. Suppose $\mathfrak b_a$ is Scott continuous. Since $L$ has a $\mathfrak b_a$-unit, \cref{lem:j-unit-iff-jL-compact} implies that $\mathfrak b_aL$ is compact. 
     Thus, $\mathfrak b_aL$ is finite because it is a compact boolean frame. The converse follows from \cref{lem:finite-scts}. 

     \itemref{thm:s-unit-scts-iff-finite}. This is the special case $a=0$ of \itemref{thm:j-unit-scts-iff-finite} since 
     every $S$-unit induces a $\mathfrak b_0$-unit via \eqref{S-unit}
     and $\mathfrak b_0L=\mathfrak B L$.
\end{proof}

If $a$ is compact and $j(a)=1$, then ${\uparrow} a$ is a $j$-unit. Consequently, every compact frame has a $j$-unit for every nucleus $j$. Applying this to the previous theorem yields: 

\begin{corollary}
    Let $L$ be a compact frame and $a \in L$.
    \begin{enumerate}[cref=theorem]
        \item $\mathfrak b_a$ is Scott
        continuous iff $\mathfrak b_aL$ is finite. 
        
        \item The double negation nucleus is
        Scott continuous iff $\mathfrak B L$ is finite.
    \end{enumerate}
\end{corollary}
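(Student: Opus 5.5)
The corollary is a direct specialization of the preceding theorem, so the plan is to verify its hypothesis in the compact setting and then read off both parts. By the remark just before the statement, if $c\in L$ is compact and $j(c)=1$, then ${\uparrow}c$ is a $j$-unit. I would first confirm this claim. The set ${\uparrow}c$ is clearly a filter. It is Scott open because if $S$ is directed and $c\le\bigvee S$, compactness of $c$ gives $c\le s$ for some $s\in S$. Finally, ${\uparrow}c\subseteq j^{-1}(1)$ holds because $b\ge c$ implies $j(b)\ge j(c)=1$.

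For part (a), take $L$ compact, so the top element $1$ is compact. Since $\mathfrak b_a(1)=1$ for any nucleus, the filter ${\uparrow}1=\{1\}$ is a $\mathfrak b_a$-unit by the previous paragraph. I would also note directly that $\{1\}$ is Scott open precisely because $L$ is compact: a directed $S$ with $\bigvee S=1$ must contain $1$. Then \cref{thm:j-unit-scts-iff-finite} gives that $\mathfrak b_a$ is Scott continuous iff $\mathfrak b_aL$ is finite.

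For part (b), I would specialize part (a) to $a=0$. Here $\mathfrak b_0(b)=(b\to0)\to0=b^{**}$ is the double negation nucleus and $\mathfrak b_0L=\mathfrak B L$. Alternatively, one can apply \cref{thm:s-unit-scts-iff-finite} by observing that in a compact frame $1$ is an $S$-unit: the set $\{b\mid 1\text{ is way below }b\}=\{1\}$ is a Scott open filter consisting of dense elements. The specialization of (a) is the cleaner route.

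There is no real obstacle here. The only step needing care is the compactness of $1$, which is exactly the hypothesis that $L$ is compact.
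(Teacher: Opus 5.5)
Your proposal is correct and follows the paper's own argument. In a compact frame $1$ is compact and $j(1)=1$, so ${\uparrow}1=\{1\}$ is a $j$-unit for every nucleus $j$, and the preceding theorem then yields (a). Part (b) is the case $a=0$ of (a), since $\mathfrak b_0L=\mathfrak B L$.
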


\newcommand{\etalchar}[1]{$^{#1}$}

\end{document}